\title{Reciprocity of the Wigner derivative for spherical tetrahedra}
\author{Bruce Bartlett and V. Hosana Ranaivomanana}
\date{}
\newtheorem{thm}{{Theorem}}[section]
\newtheorem{lem}[thm]{Lemma}
\newtheorem{cor}[thm]{Corollary}
\newtheorem{rem}[thm]{Remark}
\tikzstyle{red dot}=[fill=red, draw=black, shape=circle, tikzit fill=red]
\tikzstyle{green dot}=[fill={rgb,255: red,191; green,255; blue,0}, draw=black, shape=circle, tikzit fill={rgb,255: red,191; green,255; blue,0}, tikzit draw=black]
\tikzstyle{blue text}=[text=blue]
\tikzstyle{red text}=[red]
\tikzstyle{green text}=[text=green]
\tikzstyle{basic dashed}=[-, dashed]
\tikzstyle{basic blue}=[-, draw=blue]
\tikzstyle{basic blue dashed}=[-, draw=blue, dashed]
\tikzstyle{basic green}=[-, draw=green]
\tikzstyle{basic green dashed}=[-, draw=green, dashed]
\tikzstyle{basic green dotted}=[-, draw=green, dotted]
\tikzstyle{simple}=[-]
\DeclareMathOperator{\Rep}{Rep}
\DeclareMathOperator{\Lk}{Lk}
\begin{document}
	\maketitle
\begin{abstract}
	The Wigner derivative is the partial derivative of dihedral angle with respect to opposite edge length in a tetrahedron, all other edge lengths remaining fixed. We compute the inverse Wigner derivative for spherical tetrahedra, namely the partial derivative of edge length with respect to opposite dihedral angle, all other dihedral angles remaining fixed. We show that the inverse Wigner derivative is actually equal to the Wigner derivative. These computations are motivated by the asymptotics of the classical and quantum 6j symbols for SU(2).
\end{abstract}
\section{Introduction}
In his seminal book on group theory and quantum mechanics from 1959 \cite{wigner2012group}, Wigner studied the classical 6j symbols for $SU(2)$, which encode the associator data \cite{roberts1999classical} for $\Rep SU(2)$, its tensor category of representations. He related the 6j symbol 
\begin{equation} \label{6jsymbol}
\begin{Bmatrix}
J&j_2&j'\\
j_1&j_3&j
\end{Bmatrix}
\end{equation}
to a Euclidean tetrahedron with side lengths given by $j_1$, $j_2$, $j_3$, $J$, $j$ and $j'$ and gave a heuristic argument that the square of this 6j symbol should (on average, for large spins) be proportional to the partial derivative $\frac{\partial \theta}{\partial j'}$ of the dihedral angle $\theta$ at edge $j$ with respect to the length of the opposite edge $j'$, all other lengths being held fixed (see Figure \ref{Wigner-pic}).

\begin{figure}[t]
\centering
\begin{subfigure}[b]{0.25\linewidth}
	\includegraphics[width=\linewidth]{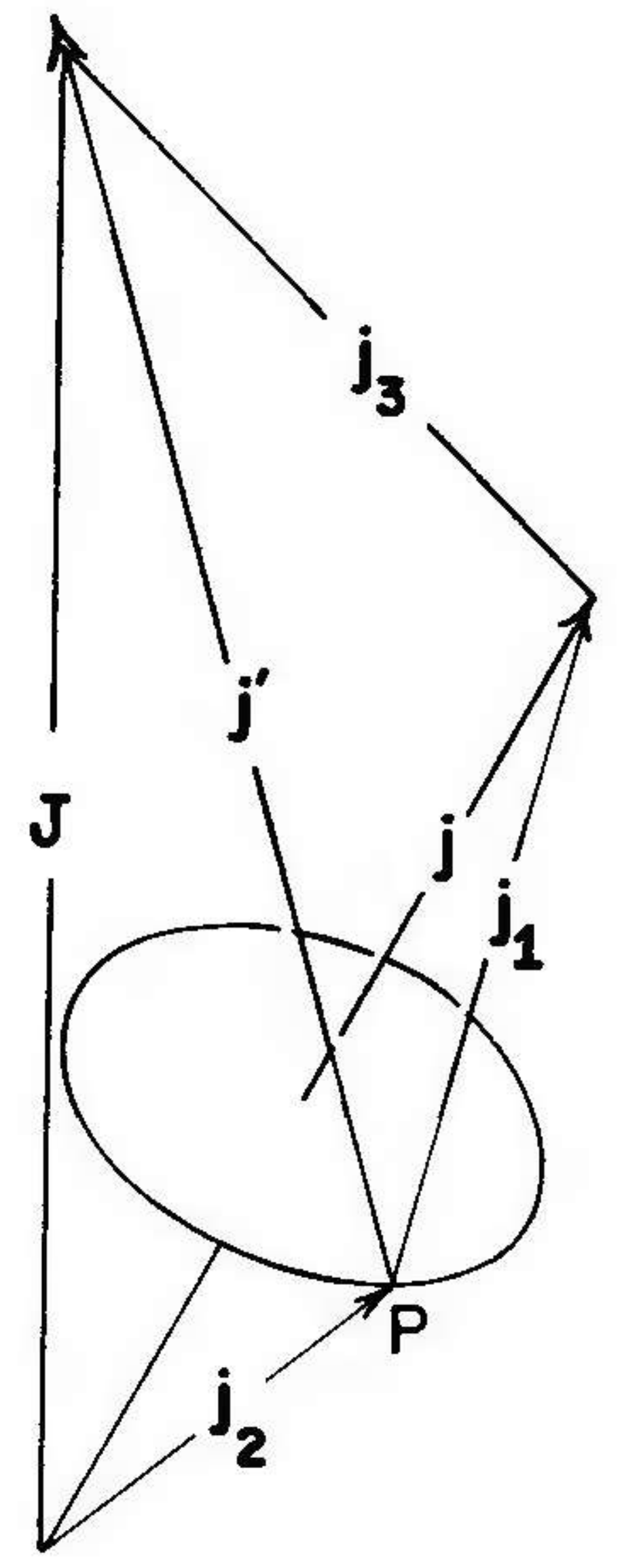}
	\caption{\label{Wigner-pic}}
\end{subfigure}\qquad
\begin{subfigure}[b]{0.45\linewidth}
	\tikzfig{tetrahedron-basic}
	\caption{\label{Wigner-derivative}}
\end{subfigure}
\caption{(a) The Euclidean tetrahedron corresponding to the 6j symbol in equation \eqref{6jsymbol}. If the lengths $j_1$, $j_2$, $j_3$, $j$ and $J$ are held constant, then $P$ can still traverse the indicated circle, changing $j'$. The probability of a given tetrahedron occurring is proportional to $\frac{\partial \theta}{\partial j'}$ where $\theta$ is the dihedral angle at the edge with length $j$. Taken from \cite{wigner2012group}; see also \cite{biedenharn1981racah}. (b) The Wigner derivative for a spherical tetrahedron is $\frac{\partial \theta}{\partial l'}$, the partial derivative of dihedral angle with respect to opposite edge length, all other edge lengths held fixed. }
\end{figure}

In 1968 the physicists Ponzano and Regge conjectured a more refined formula for the asymptotics of the classical 6j symbols, which included an oscillatory term. This formula was first proved rigorously by Roberts in 1999, using geometric quantization techniques \cite{roberts1999classical}, and since then a number of other proofs have been given \cite{barrett2003asymptotics, freidel2003asymptotics, aquilanti2012semiclassical, garoufalidis2013asymptotics, costantino2015generating}.

In 2003 Taylor and Woodward gave a corresponding asymptotic formula for the {\em quantum} 6j symbols, relating their asymptotics to the geometry of {\em spherical} tetrahedra \cite{taylor20036j, taylor2003quantum}.
 In their outline of a possible geometric proof of their formula (this approach was later made rigorous by March\'e and Paul\cite{marche2015toeplitz}), the partial derivative of dihedral angle with respect to opposite edge length (this time for a spherical tetrahedron) again played a crucial role. Following Taylor, we call this the {\em Wigner derivative} (see Figure \ref{Wigner-derivative}).	

Given a spherical tetrahedron with vertices $v_0,v_1,v_2,v_3$ and edge lengths $l_{ij}$, let $G$ be the length Gram matrix, $G_{ij}=\cos (l_{ij})$. Taylor and Woodward's formula for the Wigner derivative is as follows\footnote{Note that the actual statement of Proposition 2.4.1.(n) in \cite{taylor20036j} contains a typo. The left hand side should be $\frac{\partial \theta_{ab}}{\partial l_{cd}}$ not $(\frac{\partial \theta_{ab}}{\partial l_{cd}})^{-1}$.}. (We will give an independent proof in Section \ref{Reciprocity-section}).
  \begin{thm}[Taylor-Woodward \cite{taylor20036j}]
	The Wigner derivative for a spherical tetrahedron is
	\begin{equation}
	\label{1}
	\frac{\partial \theta(l_{ij})}{\partial l'}=\frac{\sin l\sin l'}{\sqrt{\det G}}
	\end{equation} 
	where $\theta$ is the interior dihedral angle at the edge with length $l$ and $l'$is the length of the opposite edge (see Figure \ref{Wigner-derivative}).
   \end{thm}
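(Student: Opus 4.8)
The plan is to derive the Wigner derivative by differentiating the spherical law of cosines for dihedral angles, exploiting the duality between the length Gram matrix $G$ and the angle Gram matrix. Let me think about how I would actually carry this out.

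First I would set up the two Gram matrices. For a spherical tetrahedron with vertices $v_0,\dots,v_3$, each vertex lies on the unit sphere $S^3$, and the edge lengths satisfy $G_{ij}=\cos(l_{ij})$ with $G_{ii}=1$. There is a classical relationship expressing the dihedral angle $\theta$ at a given edge in terms of the edge lengths. Specifically, the dihedral angle along an edge can be read off from the geometry of the two triangular faces meeting at that edge together with the opposite edge. My first concrete step would be to write down an explicit formula for $\cos\theta$, the cosine of the dihedral angle at edge $l$, as a ratio whose ingredients are cofactors (minors) of the Gram matrix $G$. This is the spherical analogue of the statement that dihedral angles are governed by the \emph{dual} Gram matrix: if $G^\vee$ denotes the cofactor matrix of $G$, then $\cos\theta_{ij} = -G^\vee_{kl}/\sqrt{G^\vee_{kk}G^\vee_{ll}}$ where $\{i,j,k,l\}=\{0,1,2,3\}$. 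I expect this cofactor formula to be the key structural input.

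Next I would differentiate implicitly. Since $\theta$ depends on all edge lengths only through the entries of $G$, and I am varying only $l'$ (the length of the edge opposite to $l$) while holding the other five edge lengths fixed, exactly one entry of $G$ changes, namely $G_{l'}=\cos l'$. So by the chain rule $\frac{\partial\theta}{\partial l'} = \frac{\partial\theta}{\partial (\cos l')}\cdot\frac{d(\cos l')}{dl'} = -\sin l'\,\frac{\partial\theta}{\partial (\cos l')}$. The task then reduces to computing $\frac{\partial\theta}{\partial (\cos l')}$ from the cofactor expression for $\cos\theta$. Differentiating a ratio of cofactors of $G$ with respect to a single off-diagonal entry is where the bookkeeping lives: I would use Jacobi's formula and the standard identities for derivatives of minors (each cofactor is multilinear in the matrix entries, so its derivative with respect to $G_{l'}$ is again a signed minor). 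The goal is to show that after collecting terms the whole expression collapses to $\frac{\sin l\sin l'}{\sqrt{\det G}}$.

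The main obstacle, I expect, is the algebraic collapse in the final step: a priori the derivative of the cofactor ratio produces a sum of several subdeterminant products, and making this simplify cleanly to the compact right-hand side requires a Gram-matrix determinant identity — essentially a Jacobi-type (Desnanot–Jacobi / Sylvester) relation among the minors of $G$, perhaps combined with the fact that $\sin\theta$ itself has a clean cofactor expression so that $\frac{1}{\sin\theta}$ supplies the remaining factor of $\sqrt{\det G}$. To manage this I would try to express everything in terms of the $2\times 2$ and $3\times 3$ minors indexed by the faces adjacent to edge $l'$, and look for the Sylvester identity that relates the product of the two relevant $3\times 3$ minors (the face determinants) minus their "mixed" minor to $\det G$ times the complementary $2\times 2$ minor. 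An alternative route that might sidestep some of this, and which I would keep in reserve, is a purely geometric argument using the Schläfli differential formula or the Cayley–Menger/volume interpretation, differentiating a volume-type quantity twice and using symmetry of mixed partials — this is also the natural setting in which the reciprocity claim (that the inverse Wigner derivative equals the Wigner derivative) becomes transparent, since symmetry of second derivatives of a single generating function would force $\frac{\partial\theta}{\partial l'}$ and $\frac{\partial l'}{\partial\theta}$ to share the same symmetric structure.
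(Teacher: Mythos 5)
Your route is correct, and it is genuinely different from the paper's. You stay entirely inside the linear algebra of the length Gram matrix: writing $l=l_{01}$, $l'=l_{23}$, the polar-dual formula gives $\cos\theta=-c_{23}/\sqrt{c_{22}c_{33}}$ in terms of cofactors of $G$, and your plan does collapse exactly as you anticipate. Indeed, neither diagonal cofactor $c_{22}$, $c_{33}$ contains the entry $G_{23}=G_{32}=\cos l'$, so only the numerator varies, with $\partial c_{23}/\partial(\cos l')=-\sin^2 l$ (the signed complementary $2\times 2$ minor); and the Desnanot--Jacobi identity you call for is precisely $c_{22}c_{33}-c_{23}^2=\det G\,\sin^2 l$, which gives $\sin\theta=\sin l\,\sqrt{\det G}/\sqrt{c_{22}c_{33}}$. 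Combining these,
\begin{equation*}
\frac{\partial\theta}{\partial l'}=\Bigl(-\tfrac{1}{\sin\theta}\Bigr)\cdot\frac{\sin^2 l}{\sqrt{c_{22}c_{33}}}\cdot(-\sin l')=\frac{\sin^2 l\,\sin l'}{\sin\theta\,\sqrt{c_{22}c_{33}}}=\frac{\sin l\sin l'}{\sqrt{\det G}},
\end{equation*}
so there is in fact less bookkeeping than you feared. The paper argues differently: it reduces to two-dimensional spherical trigonometry by passing to the link of a vertex (its lemma $\Gamma=E$), writes the dihedral angle as the composite $E=E(\alpha(a,c,e),\beta(b,d,e),\gamma(a,b,f))$ as in \eqref{8}, applies the chain rule and the triangle derivatives \eqref{6} to obtain \eqref{w1} of Lemma \ref{lemma4}, and then converts to the stated form \eqref{1} via Freidel--Louapre's factorization $\sqrt{\det G}=\sin a\sin b\sin e\sin\alpha\sin\beta\sin E$ (Lemma \ref{lemma5}). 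Your argument is shorter and more self-contained for this particular theorem, and it makes the origin of $\sqrt{\det G}$ transparent (it comes straight out of Jacobi's identity); what the paper's link machinery buys is reuse: the same setup, run through the dual cosine laws, yields the inverse Wigner derivative \eqref{w2} and hence the paper's main results (Theorem \ref{main-theorem} and Corollary \ref{main-corollary}), whereas your method would need a separate dualization (the Gram matrix of face normals) to handle derivatives taken with the five other dihedral angles held fixed.

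One caveat on your reserve plan: the Schl\"afli differential formula gives symmetry of the Jacobian matrices $(\partial\theta_e/\partial l_{e'})$ and $(\partial l_e/\partial\theta_{e'})$, as symmetry of second derivatives of a generating function. But the reciprocity in this paper is the stronger statement that an entry of one Jacobian equals the \emph{corresponding entry of its inverse}; symmetry of mixed partials alone does not yield that, so the reserve route would not substitute for the cofactor computation.
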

Unlike a Euclidean tetrahedron, a spherical tetrahedron is determined up to isometry by its six edge lengths as well as by its six dihedral angles. So there is a 1-1 correspondence between edge lengths and dihedral angles
$$
(l_{01},l_{02},l_{03},l_{12},l_{13},l_{23})\leftrightarrow (\theta_{01},\theta_{02},\theta_{03},\theta_{12},\theta_{13},\theta_{23}).
$$
See \eqref{8} in Section \ref{Reciprocity-section} for an explicit formula. Therefore it makes sense to ask about the {\em inverse} Jacobian matrix $\frac{\partial l_{ij}}{\partial \theta_{kl}}$ and in particular the {\em inverse} Wigner derivative $\frac{\partial l'}{\partial \theta}$ in Figure \ref{Wigner-derivative}. Indeed, in our work we were led to consider this inverse Jacobian as it shows up in the stationary phase approximation for a conjectural integral formula for the quantum 6j symbols. The main result of this paper is as follows.
\begin{thm}
	\label{inv Wigner}
	The inverse Wigner derivative for a spherical tetrahedron (see Figure \ref{Wigner-derivative}) is
	\begin{equation}
	\label{2}
	 \frac{\partial l'(\theta_{ij})}{\partial \theta}=\frac{\sin l\sin l'}{\sqrt{\det G}} 
	 \end{equation}
\end{thm}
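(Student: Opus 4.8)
The plan is to deduce Theorem~\ref{inv Wigner} from the Taylor--Woodward formula \eqref{1} by exploiting the polar (Gram) duality of spherical tetrahedra, which converts the \emph{inverse} Jacobian computation into a \emph{direct} Wigner derivative for the dual tetrahedron. The essential point to keep in mind is that $\frac{\partial l'}{\partial\theta}$ is \emph{not} the reciprocal of $\frac{\partial\theta}{\partial l'}$: the former holds the other five dihedral angles fixed while the latter holds the other five edge lengths fixed, so it is a genuine entry of the inverse Jacobian of the map (edge lengths)$\to$(dihedral angles). Duality is precisely the tool that linearizes this.

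First I would set up the dual. Writing the vertices as unit vectors $v_0,\dots,v_3\in S^3$ so that $G_{ij}=\langle v_i,v_j\rangle=\cos l_{ij}$, I let $w_i$ be the outward unit normal to the face opposite $v_i$, and take the $w_i$ as the vertices of the polar dual $T^*$, which is again a spherical tetrahedron, with length Gram matrix $G^*$. A short computation with the dual basis gives $(G^*)_{ij}=\epsilon_i\epsilon_j\,C_{ij}/\sqrt{C_{ii}C_{jj}}$, where $C_{ij}$ is the $(i,j)$ cofactor of $G$ and $\epsilon_i=\pm1$ are orientation signs; for complementary index pairs $\{i,j\}\sqcup\{k,l\}=\{0,1,2,3\}$ the standard relation $\cos\theta_{ij}=-C_{kl}/\sqrt{C_{kk}C_{ll}}$ then shows that $T^*$ has edge lengths $l^*_{kl}=\pi-\theta_{ij}$ and, by involutivity, dihedral angles $\theta^*_{kl}=\pi-l_{ij}$.

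Next I would run the substitution. Taking $\theta=\theta_{ij}$ at the edge of length $l=l_{ij}$ with opposite edge $l'=l_{kl}$, the relations $l_{kl}=\pi-\theta^*_{ij}$ and $\theta_{ij}=\pi-l^*_{kl}$ cancel the constants and minus signs to give
\begin{equation*}
\frac{\partial l'}{\partial\theta}=\frac{\partial l_{kl}}{\partial\theta_{ij}}=\frac{\partial\theta^*_{ij}}{\partial l^*_{kl}},
\end{equation*}
and one checks that holding the other five dihedral angles $\theta_{pq}$ of $T$ fixed is exactly holding the other five edge lengths $l^*_{mn}$ of $T^*$ fixed. Hence the right-hand side is a genuine Wigner derivative for $T^*$, with $l^*_{ij}$ in the role of $l$ and $l^*_{kl}$ in the role of $l'$, so Taylor--Woodward \eqref{1} applied to $T^*$ yields $\frac{\sin l^*_{ij}\sin l^*_{kl}}{\sqrt{\det G^*}}=\frac{\sin\theta_{kl}\sin\theta_{ij}}{\sqrt{\det G^*}}$.

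It then remains to prove the reciprocity identity $\frac{\sin\theta_{ij}\sin\theta_{kl}}{\sqrt{\det G^*}}=\frac{\sin l_{ij}\sin l_{kl}}{\sqrt{\det G}}$, i.e.\ that the expression \eqref{1} is invariant under polar duality. For this I would combine two standard facts. From $\cos\theta_{ij}=-C_{kl}/\sqrt{C_{kk}C_{ll}}$ together with the Desnanot--Jacobi identity $C_{kk}C_{ll}-C_{kl}^2=(\det G)(1-\cos^2 l_{ij})$, I obtain the spherical sine law $\sin\theta_{ij}=\sqrt{\det G}\,\sin l_{ij}/\sqrt{C_{kk}C_{ll}}$, whence $\frac{\sin\theta_{ij}\sin\theta_{kl}}{\sin l_{ij}\sin l_{kl}}=\det G/\sqrt{\textstyle\prod_m C_{mm}}$. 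Separately, $\det G^*=(\det G)^3/\prod_m C_{mm}$ follows from $\mathrm{adj}(G)=(\det G)G^{-1}$ and the diagonal rescaling above, so that $\sqrt{\det G^*/\det G}=\det G/\sqrt{\prod_m C_{mm}}$ as well; the two agree, proving the identity and hence the theorem. I expect the main obstacle to be bookkeeping rather than any single hard estimate: pinning down the orientation signs $\epsilon_i$, fixing $\det G>0$ to choose the correct square roots, and verifying that the ``held-fixed'' variables correspond correctly under duality.
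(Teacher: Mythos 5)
Your proof is correct, and it takes a genuinely different route from the paper's. The paper works intrinsically with the links of the vertices: the dihedral angle $E$ is identified with an interior angle of the spherical triangle $\Lk(v_0)$, both the Wigner derivative and its inverse are computed by the chain rule through the common inner angle $\gamma$ (using the cosine and dual cosine laws for the links and face triangles), and the Freidel--Louapre factorization $\sqrt{\det G}=\sin a\sin b\sin e\sin\alpha\sin\beta\sin E$ reduces everything to the triangle case; reciprocity then drops out of the sine laws. You instead invoke polar duality $T\mapsto T^*$, which turns the inverse Wigner derivative of $T$ (other dihedral angles fixed) into a genuine Wigner derivative of $T^*$ (other edge lengths of $T^*$ fixed), apply the Taylor--Woodward formula \eqref{1} to $T^*$, and close the argument with the cofactor identities $\sin\theta_{ij}=\sqrt{\det G}\,\sin l_{ij}/\sqrt{C_{kk}C_{ll}}$ (via Jacobi/Desnanot) and $\det G^*=(\det G)^3/\prod_m C_{mm}$ (via the adjugate). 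Your chain of identities checks out: the ratio $\sin\theta_{ij}\sin\theta_{kl}/(\sin l_{ij}\sin l_{kl})$ and the ratio $\sqrt{\det G^*/\det G}$ both equal $\det G/\sqrt{\prod_m C_{mm}}$, which is exactly the needed self-duality of the expression $\sin l\sin l'/\sqrt{\det G}$. What each approach buys: the paper's argument is fully self-contained (it reproves Taylor--Woodward along the way from two-dimensional spherical trigonometry only), whereas yours takes \eqref{1} as a black box; in exchange, yours is conceptually more illuminating, since it exhibits the reciprocity of Corollary \ref{main-corollary} as the statement that the Taylor--Woodward expression is invariant under polar duality, and it also yields the analogue of the paper's formula \eqref{8} (explicit dependence of $l'$ on the dihedral angles) for free. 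The caveats you flag are the right ones and are genuinely minor: with outward normals one has $\cos\theta_{ij}=-C_{kl}/\sqrt{C_{kk}C_{ll}}$, the orientation signs $\epsilon_i$ cancel in every determinant and in the duality relations $l^*_{kl}=\pi-\theta_{ij}$, $\theta^*_{kl}=\pi-l_{ij}$, and $\det G>0$ for a nondegenerate spherical tetrahedron fixes all square roots positively.
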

Comparing with formula \eqref{1} for the Wigner derivative, we obtain the following corollary.
\begin{cor}[Reciprocity of the Wigner derivative]
	For spherical tetrahedra, the Wigner derivative and the inverse Wigner derivative are equal:
	\begin{equation}
	\label{3}
	\frac{\partial \theta(l_{ij})}{\partial l'}=\frac{\partial l'(\theta_{ij})}{\partial \theta}
	\end{equation}
	
\end{cor}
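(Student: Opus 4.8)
The corollary is immediate by transitivity: the Taylor--Woodward formula \eqref{1} evaluates the Wigner derivative and Theorem \ref{inv Wigner} evaluates the inverse Wigner derivative, and both produce the same expression $\frac{\sin l \sin l'}{\sqrt{\det G}}$, so the two derivatives coincide. This one-line argument is what I would record as the proof. What is worth understanding, though, is \emph{why} the two should be equal: for a one-variable change of coordinates one has $\frac{dx}{dy} = (\frac{dy}{dx})^{-1}$, so equality, rather than the usual reciprocal relationship, is a genuinely special feature of this six-variable setting and deserves a conceptual explanation.

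A conceptual proof that does not presuppose both formulas would run through polar duality. Writing $J = \partial\theta/\partial l$ for the Jacobian of the length-to-angle correspondence, the Wigner derivative is the matrix entry $J_{e,\bar e}$, where $e$ is the edge of length $l$ and $\bar e$ the opposite edge of length $l'$, while the inverse Wigner derivative is the entry $(J^{-1})_{\bar e, e}$. First I would record that both $J$ and $J^{-1}$ are \emph{symmetric}: the Schl\"afli differential formula $dV = \tfrac12\sum_e l_e\, d\theta_e$ exhibits the volume $V(\theta)$ as a potential with $l_e = 2\,\partial V/\partial\theta_e$, so $\partial l_e/\partial\theta_f$ is symmetric, and its Legendre transform does the same for $\partial\theta_e/\partial l_f$. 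Symmetry of $J^{-1}$ already reduces the claim to the equality $J_{e,\bar e} = (J^{-1})_{e,\bar e}$.

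Next I would bring in the polar dual $T^*$, whose vertices are the outward face-normals of $T$ and whose edge lengths and dihedral angles are $l^*_e = \pi - \theta_{\bar e}$ and $\theta^*_e = \pi - l_{\bar e}$, the opposite-edge involution $e\mapsto\bar e$ being built into the duality. Letting $P$ be the permutation matrix of this involution, a direct chain-rule computation gives $J(T^*) = P\, J(T)^{-1}\, P$, and reading off the $(e,\bar e)$ entry yields $J(T^*)_{e,\bar e} = (J(T)^{-1})_{\bar e, e}$; that is, the Wigner derivative of the dual equals the inverse Wigner derivative of the original. Combined with the symmetry above, the whole corollary is therefore equivalent to the single statement that the Wigner derivative is invariant under polar duality, $\frac{\partial\theta}{\partial l'}(T) = \frac{\partial\theta}{\partial l'}(T^*)$.

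Finally, feeding both sides of this invariance through \eqref{1} and using $\sin l^*_e = \sin\theta_{\bar e}$ and $\sin l^*_{\bar e} = \sin\theta_e$, the claim collapses to a determinant identity comparing $\det G$ for $T$ with $\det G$ for $T^*$, i.e.\ the edge-length Gram matrix of $T$ against its normal (dihedral-angle) Gram matrix. This determinant identity is exactly where I expect the work to concentrate, and it is the main obstacle: the face-normals are the dual basis to the vertex vectors, so the two Gram matrices are inverse up to a diagonal rescaling by the principal minors of $G$, and turning this into the required equality of sine-to-determinant ratios is a form of the spherical law of sines for tetrahedra. It is precisely this trigonometric identity that the short comparison-of-formulas proof lets one bypass.
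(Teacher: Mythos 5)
Your recorded proof---reading off the Wigner derivative from the Taylor--Woodward formula \eqref{1} and the inverse Wigner derivative from Theorem \ref{inv Wigner}, then concluding equality by transitivity---is exactly how the paper itself deduces this corollary, so the proposal is correct and takes the same approach. The polar-duality discussion you append is an interesting conceptual aside but is explicitly a sketch (you yourself flag the Gram-determinant identity as the unresolved obstacle), and it is not needed for the corollary as stated.
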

\begin{rem}
	In the proof of \cite[Proposition 2.4.1.(n)]{taylor20036j} and in \cite[Proposition 2.2.0.5]{taylor2003quantum} Taylor and Woodward show that
\[
	\label{4}
	\frac{\partial l'}{\partial \theta}=\frac{\sqrt{\det G}}{\sin l\sin l'}
\]
which is the reciprocal of our formula \eqref{2} in Theorem \ref{inv Wigner} and thus seems to contradict it. What is going on? The answer is that they are {\em different} partial derivatives as different sets of variables are being held constant. In our formula \eqref{2}, $\theta$ is changing while keeping the five remaining dihedral angles constant, while in Taylor and Woodward's formula \eqref{3}, $\theta$ is changing while all lengths excluding $l'$ are being held constant. It is interesting that these two different partial derivatives are reciprocals of each other. To the best of our knowledge, formula \eqref{2} and its corollary \eqref{3} are new (see \cite{petrera2014spherical} for related work).
\end{rem}
	\paragraph{Overview of paper}
	
	In Section \ref{section 1} we show, as a warm-up result, that reciprocity of the Wigner derivative holds for spherical triangles. In Section \ref{Reciprocity-section} we consider spherical tetrahedra. By relating the dihedral angles to the edge lengths via the {\em links} of the vertices, we can apply the reasoning from Section 1 and hence prove our main results, Theorem \ref{main-theorem} and Corollary \ref{main-corollary}.

\section{Reciprocity of the Wigner derivative for spherical triangles}
\label{section 1}
In this section we review some elementary spherical trigonometry, and prove reciprocity of the Wigner derivative for spherical triangles. This serves as a warm-up example before tackling spherical tetrahedra.

Consider a spherical triangle $\Delta\subseteq S^2$ as in Figure \ref{spherical-triangle-fig} with vertices $v_0,v_1,v_2\in S^2$,
$$
\Delta := \{t_0v_0+t_1v_1+t_2v_2: t_0,t_1,t_2\geq 0\}\cap S^2.
$$

\begin{figure}[t]
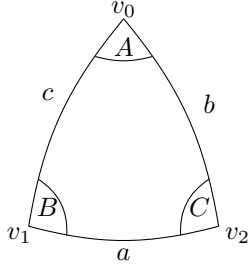

\ctikzfig{sphericaltriangle}
\caption{\label{spherical-triangle-fig} A spherical triangle.}
\end{figure}

The {\em sine law} says that 
\begin{equation}
\label{5}
\frac{\sin a}{\sin A}=\frac{\sin b}{\sin B}=\frac{\sin c} {\sin C} \, .
\end{equation}
The {\em cosine law} expresses the interior angles in terms of the side lengths,
\begin{equation}
\label{cosA}
\cos A=\frac{\cos a-\cos b\cos c}{\sin b\sin c} \, ,
\end{equation}
while the {\em dual cosine law} expresses the side lengths in terms of the interior angles:
\begin{equation}
\label{cosa}
\cos a =\frac{\cos A+\cos B\cos C}{\sin B\sin C} 
\end{equation} 
For the sine and cosine laws, see \cite{cosine-law-wiki}. From (\ref{cosA}) and (\ref{cosa}) we obtain 
\begin{equation}
\label{6}
\frac{\partial A}{\partial a}=\frac{\sin a}{\sin A\sin b\sin c}, \qquad \frac{\partial a}{\partial A}=\frac{\sin A}{\sin a\sin B\sin C}
\end{equation}
To write these formulas in the form of (\ref{1}) and (\ref{2}), we introduce the length Gram matrix $G_{ij}=\cos l(v_i,v_j)$,
$$
G=\begin{pmatrix}
1&\cos c&\cos b\\
\cos c&1&\cos a\\
\cos b& \cos a& 1
\end{pmatrix} \, .
$$
The following fact is fairly well known, but we include a proof in order to be self-contained and because Lemma \ref{lemma5} uses similar manipulations.
\begin{lem}
    \label{lemma3}
	$\sqrt{\det G}=\sin A\sin b\sin c$
\end{lem}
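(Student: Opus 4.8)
The plan is to prove the identity by a direct computation, showing that both sides have equal squares and then taking positive square roots. First I would expand the $3\times 3$ determinant by cofactors along the top row. Since $G$ is symmetric with ones on the diagonal, this collapses to the well-known symmetric expression
\begin{equation*}
\det G = 1 - \cos^2 a - \cos^2 b - \cos^2 c + 2\cos a \cos b \cos c,
\end{equation*}
which is a short and routine calculation.

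Next I would compute the square of the right-hand side. Using $\sin^2 A = 1 - \cos^2 A$ together with the cosine law \eqref{cosA}, namely $\cos A = (\cos a - \cos b \cos c)/(\sin b \sin c)$, I get
\begin{equation*}
\sin^2 A \, \sin^2 b \, \sin^2 c = \sin^2 b \, \sin^2 c - (\cos a - \cos b \cos c)^2.
\end{equation*}
Expanding $\sin^2 b \, \sin^2 c = (1-\cos^2 b)(1-\cos^2 c)$ and the square on the right, the cross term $\cos^2 b \cos^2 c$ cancels, leaving exactly the symmetric expression found for $\det G$ above. Thus $\det G = \sin^2 A \, \sin^2 b \, \sin^2 c$.

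Finally I would take square roots to conclude $\sqrt{\det G} = \sin A \, \sin b \, \sin c$. The only point requiring care is the choice of sign: for a genuine spherical triangle all side lengths and interior angles lie in $(0,\pi)$, so $\sin A$, $\sin b$ and $\sin c$ are all strictly positive, and the positive root is the correct one. There is no real obstacle here beyond bookkeeping in the algebraic expansion; the mild subtlety is simply confirming the positivity of the sines so that the square-root identity holds with the intended sign rather than up to $\pm$. This same pattern of substituting a cosine law into $1-\cos^2$ and matching against a Gram determinant is exactly what will be reused, in a higher-dimensional guise, for Lemma \ref{lemma5} and the tetrahedral case.
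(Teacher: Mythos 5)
Your proposal is correct and is essentially the paper's own argument: both proofs reduce to showing $\det G = \sin^2 b \sin^2 c - (\cos a - \cos b \cos c)^2$ and then invoke the cosine law \eqref{cosA} to identify this with $\sin^2 A\,\sin^2 b\,\sin^2 c$, differing only in bookkeeping (you expand the determinant to the symmetric polynomial and match both sides, while the paper uses row operations and factors directly). Your explicit remark that $\sin A$, $\sin b$, $\sin c > 0$ justifies the positive square root is a small point the paper leaves implicit.
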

\begin{proof}
	By row operations we obtain
	\begin{align*}
	\det G&= \det \begin{pmatrix}
	1&\cos c&\cos b\\
	0&1-\cos^2 c&\cos a-\cos b\cos c\\
	0& \cos a-\cos b\cos c& 1-\cos^2 b
	\end{pmatrix}\\
	&=\sin^2 b\sin^2 c-(\cos a-\cos b\cos c)^2\\
	&= \sin^2 b\sin ^2c - \sin^2 b\sin^2 c\ \left(\frac{\cos a-\cos b\cos c}{\sin b\sin c}\right)^2\\
	&=\sin^2b\sin^2c\ (1-\cos^2 A)\\
	&=\sin^2b\sin^2c\ \sin^2A
	\end{align*}
	where we have used the cosine law in the second last step.
\end{proof}	
This allows us to prove the formula for the Wigner derivative and its inverse for spherical triangles, and show that they are equal.
\begin{thm}[Wigner reciprocity for spherical triangles]
	For spherical triangles, we have
	$$\frac{\partial A(a,b,c)}{\partial a}=\frac{\sin a}{\sqrt{\det G}}=\frac{\partial a(A,B,C)}{\partial A} \, .
	$$
\end{thm}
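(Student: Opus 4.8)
The plan is to prove the two equalities separately, each by combining ingredients already established in the excerpt. The first equality is essentially immediate: the left-hand formula in \eqref{6} gives $\frac{\partial A}{\partial a}=\frac{\sin a}{\sin A\sin b\sin c}$, and substituting the identity $\sqrt{\det G}=\sin A\sin b\sin c$ from Lemma \ref{lemma3} into the denominator yields $\frac{\partial A}{\partial a}=\frac{\sin a}{\sqrt{\det G}}$ at once.

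For the second equality I would start from the right-hand formula in \eqref{6}, namely $\frac{\partial a}{\partial A}=\frac{\sin A}{\sin a\sin B\sin C}$, and show it also equals $\frac{\sin a}{\sqrt{\det G}}$. The bridge between the ``length'' form $\sin A\sin b\sin c$ of $\sqrt{\det G}$ and the ``angle'' data $\sin B,\sin C$ appearing in $\frac{\partial a}{\partial A}$ is the sine law \eqref{5}. Concretely, I would use $\sin b=\frac{\sin a\sin B}{\sin A}$ and $\sin c=\frac{\sin a\sin C}{\sin A}$ to rewrite
$$
\sqrt{\det G}=\sin A\sin b\sin c=\frac{\sin^2 a\,\sin B\sin C}{\sin A} \, .
$$
Dividing $\sin a$ by this expression then gives $\frac{\sin a}{\sqrt{\det G}}=\frac{\sin A}{\sin a\sin B\sin C}$, which matches $\frac{\partial a}{\partial A}$ exactly, completing the chain.

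Since every component---the derivative formulas \eqref{6}, Lemma \ref{lemma3}, and the sine law \eqref{5}---is already in hand, there is no genuine obstacle here; the only real content is recognizing that the sine law is precisely the identity converting $\sqrt{\det G}$ between its length and angle expressions, thereby exhibiting the symmetry between the Wigner derivative and its inverse. One could alternatively note that the statement is invariant under the polar duality interchanging $(a,b,c)\leftrightarrow(A,B,C)$ that swaps the cosine law \eqref{cosA} with the dual cosine law \eqref{cosa}, but the direct sine-law computation is the most transparent route for this warm-up case.
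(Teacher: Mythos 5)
Your proposal is correct and takes essentially the same route as the paper: both prove the first equality by combining \eqref{6} with Lemma \ref{lemma3}, and both obtain the second equality from the sine law \eqref{5}. The only cosmetic difference is that the paper verifies the second equality by showing the ratio $\frac{\partial A/\partial a}{\partial a/\partial A}$ equals $1$, whereas you rewrite $\sqrt{\det G}$ in its angle form $\frac{\sin^2 a\,\sin B\sin C}{\sin A}$ and match it directly against \eqref{6}; these amount to the same computation.
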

\begin{proof}
	The first equation follows directly from (\ref{6}a) and Lemma \ref{lemma3}. The second equation follows from: 
	\begin{align*}
	\frac{\frac{\partial A}{\partial a}}{\frac{\partial a}{\partial A}}&=\frac{\sin^2 a \sin B\sin C}{\sin^2 A\sin b\sin c} && \text{(by \ref{6})}\\
	&=1 && \text{(by the sine rule)}
	\end{align*} 
\end{proof}

\section{Reciprocity of the Wigner derivative for spherical tetrahedra}
\label{Reciprocity-section}
In this section we prove reciprocity of the Wigner derivative for spherical tetrahedra. Our method is to use the links of the vertices (as in \cite{luo20083}) to express the dihedral angles as explicit functions of the edge lengths, and then to use Freidel and Louapre's formula \cite{freidel2003asymptotics} for the determinant of the Gram matrix.

Consider a spherical tetrahedron $\Delta\subseteq S^3$ with vertices $v_0,v_1,v_2,v_3\in S^3$,
$$
\Delta := \{t_0v_0+t_1v_1+t_2v_2+t_3v_3:t_0,t_1,t_2,t_3\geq 0\}\cap S^3.
$$
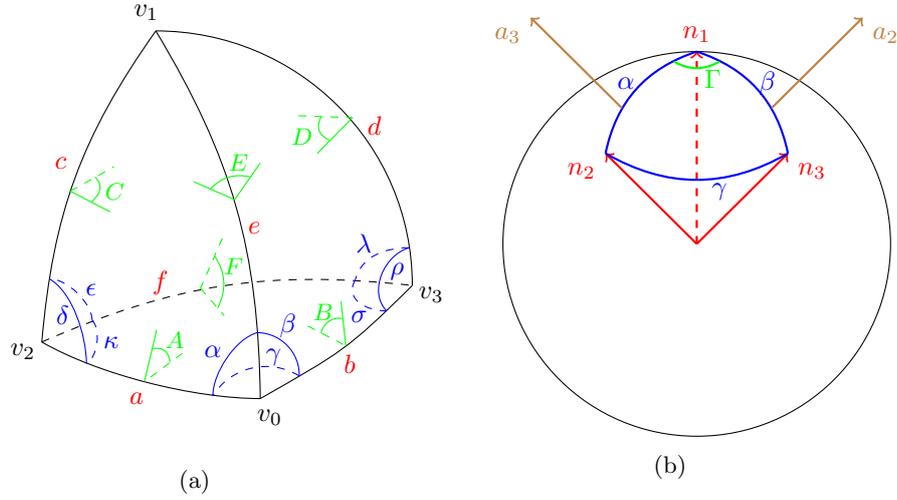
\begin{figure}[t]
\centering
\begin{subfigure}{0.4\linewidth}
	\ctikzfig{tetrahedron}
	\caption{\label{tetrahedron-with-angles}}
\end{subfigure}\qquad \qquad 
\begin{subfigure}{0.4\linewidth}
\begin{tikzpicture}[scale=0.6]
\draw (0,0) circle (4.25cm);
\draw[red,thick,->] (0,0)--(2,2) node[anchor=north west] {$n_3$};
\draw[red, thick,->] (0,0)--(-2,2) node[anchor=north east] {$n_2$};
\draw[red,dashed, thick, ->] (0,0)--(0,4.25) node[anchor=south] {$n_1$};
\draw[blue, thick] (0,4.25) to[bend right] (-2,2);
\draw[blue, thick] (0,4.25) to[bend left] (2,2);
\draw[blue, thick] (-2,2) to[bend right] (2,2);
\draw (-1.55,3.55) node[blue, thick]{$\alpha$};
\draw (1.55,3.55) node[blue, thick] {$\beta$};
\draw (0.5,1.15) node[blue, thick] {$\gamma$};
\draw[green, thick] (-0.5,4.03) to[bend right] (0.5,4.03) ;
\draw (0.35,3.65) node[green, thick] {$\Gamma$};
\draw[brown, thick, ->] (1.64,3)--(3.64,5) node[anchor= north west] {$a_2$};
\draw[brown, thick, ->] (-1.64,3)--(-3.64,5) node[anchor= north east] {$a_3$};
\end{tikzpicture}
	\caption{\label{sphere-pic}}
\end{subfigure}

\caption{\label{tet-fig-big}(a) A spherical tetrahedron $\Delta$. (b)  The link of $v_0$.}
\end{figure}
In Figure \ref{tet-fig-big}a, the edge lengths $a,b,c,d,e,f$ and interior dihedral angles $A,B,C,D,E,F$ are shown, as well as the {\em inner angles} at $v_0,v_2$ and $v_3$. 

The {\em link} $\Lk(v)$ of a vertex $v$ is the spherical triangle with edge lengths given by the inner angles at $v$. In $\Lk(v_0)$, let $\Gamma$ be the interior angle opposite the edge with length $\gamma$, as in Figure \ref{tet-fig-big}b. The following fact is used in \cite{luo20083}; here we give an explicit proof.

\begin{lem}
	$\Gamma =E.$
\end{lem}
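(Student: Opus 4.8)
The plan is to work in the tangent space $T_{v_0}S^3\cong\mathbb{R}^3$ and to realise both $\Gamma$ and $E$ as the angle between the \emph{same} pair of vectors. Let $n_1,n_2,n_3$ be the unit tangent vectors at $v_0$ pointing along the geodesics $v_0v_1$, $v_0v_2$, $v_0v_3$; these are exactly the vertices of $\Lk(v_0)$ on the unit sphere $S^2\subseteq T_{v_0}S^3$, and the inner angles at $v_0$ give the link edge lengths via $\cos\alpha=n_1\cdot n_2$, $\cos\beta=n_1\cdot n_3$ and $\cos\gamma=n_2\cdot n_3$. By definition $\Gamma$ is the interior angle of $\Lk(v_0)$ at the vertex $n_1$, that is, the angle between the geodesic arcs $n_1n_2$ and $n_1n_3$ at $n_1$. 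Their tangent directions there are the components of $n_2,n_3$ orthogonal to $n_1$, namely $p_2=n_2-(n_1\cdot n_2)\,n_1$ and $p_3=n_3-(n_1\cdot n_3)\,n_1$. A short calculation gives $p_2\cdot p_3=(n_2\cdot n_3)-(n_1\cdot n_2)(n_1\cdot n_3)$, $|p_2|=\sin\alpha$ and $|p_3|=\sin\beta$, so that
\[
\cos\Gamma=\frac{\cos\gamma-\cos\alpha\cos\beta}{\sin\alpha\sin\beta},
\]
which is the spherical cosine law \eqref{cosA} for the link triangle.

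Next I would compute $E$ and show it equals the \emph{same} angle. The interior dihedral angle along the edge $v_0v_1$ is the angle between the faces $v_0v_1v_2$ and $v_0v_1v_3$ measured in the plane orthogonal to that edge; at $v_0$ this plane is $n_1^{\perp}$, and the two faces cut it along precisely the rays spanned by $p_2$ and $p_3$. Hence $E$ is the angle between $p_2$ and $p_3$, and comparison with the display above yields $\cos E=\cos\Gamma$; since both are interior angles lying in $(0,\pi)$ we conclude $E=\Gamma$. The same conclusion can be reached through the face normals $a_2,a_3$ of Figure~\ref{sphere-pic}: one has $(n_1\times n_2)\cdot(n_1\times n_3)=(n_2\cdot n_3)-(n_1\cdot n_2)(n_1\cdot n_3)$ together with $|n_1\times n_2|=\sin\alpha$ and $|n_1\times n_3|=\sin\beta$, which reproduces the same cosine (the cross products are just $90^\circ$ rotations of $p_2,p_3$ inside $n_1^{\perp}$).

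The computation is routine; the one point demanding care --- and the main obstacle --- is the orientation bookkeeping needed to pin down the angle itself rather than its supplement. The cleanest route is the projection description above, because $p_2$ and $p_3$ point into the interior of the tetrahedron and into the interior of the link triangle on the \emph{same} side of the edge, so the interior dihedral angle and the interior link angle are literally the angle between the same two vectors, with no $\pi-(\cdot)$ ambiguity. If one instead argues with the normals $a_2,a_3$, one must verify that they are oriented consistently (both pointing to the same side of the edge) so that the angle between them is $E$ and not $\pi-E$; this is exactly the sign subtlety that the projection argument avoids.
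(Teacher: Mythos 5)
Your proof is correct, and it differs from the paper's in mechanism, though both rest on the same underlying idea of localizing at $v_0$ and matching data of $\Lk(v_0)$ with data of $\Delta$ along the edge $v_0v_1$. The paper rotates $\Delta$ into the standard position $v_0=(1,0,0,0)$, $v_i=(\cos\theta_i,\sin\theta_i n_i)$, and then compares the \emph{normal-vector} characterizations of the two angles: the outward unit normals to the faces $v_0v_1v_3$ and $v_0v_1v_2$ are $w_2=(0,a_2)$ and $w_3=(0,a_3)$, where $a_2,a_3$ are the outward normals to the edges $n_1n_3$ and $n_1n_2$ of $\Lk(v_0)$, whence $\cos E=-w_2\cdot w_3=-a_2\cdot a_3=\cos\Gamma$ with no computation at all. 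You instead compare the \emph{tangent-ray} characterizations, realizing both $\Gamma$ and $E$ as the angle between the same pair of projections $p_2,p_3$ of $n_2,n_3$ onto $n_1^{\perp}$, and you evaluate the common cosine explicitly, recovering the cosine law \eqref{cosA} for the link triangle as a byproduct. Your route buys exactly what you advertise: since both angles are literally the angle between $p_2$ and $p_3$, there is no outward-versus-inward orientation check, whereas in the paper that bookkeeping is compressed into the word ``Evidently'' (one must verify that outward face normals of $\Delta$ correspond to outward edge normals of the link). The paper's route buys brevity, since it takes the normal-vector formulas for $\cos E$ and $\cos\Gamma$ as definitions and needs only to match up the normals. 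Your closing remark via the cross products $n_1\times n_2$ and $n_1\times n_3$, once their orientations are pinned down, is essentially the paper's argument transported into $T_{v_0}S^3$.
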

\begin{proof}
	By acting with an appropriate element of $SO(4)$, we can rotate $\Delta$ so that
	$$
	v_0=(1,0,0,0),\qquad v_i=(\cos\theta_i,\sin\theta_in_i) \quad i=1,\cdots,3
	$$ 
Here $n_i\in S^2$ are the vertices of $Lk(v_0)$ as in Figure \ref{tet-fig-big}b. By definition, 
$$
\cos E=-w_2 \cdot w_3
$$
where $w_2,w_3\in \mathbb{R}^4$ are the outward unit normals to the faces $v_0v_1v_3$ and $v_0v_1v_2$ of $\Delta$ respectively. Evidently we have
$$
w_2=(0,a_2), \qquad w_3=(0,a_3)
$$
where $a_2,a_3\in \mathbb{R}^3 $ are the outward unit normals to the edges $n_1n_3$ and $n_1n_2$ of $Lk(v_0)$ respectively (see Figure \ref{tet-fig-big}b). But by definition, 
$$
\cos\Gamma=-a_2 \cdot a_3
$$ 
which shows that $\Gamma = E$.
\end{proof}
\begin{lem} \label{lemma4} In the spherical tetrahedron $\Delta$, the Wigner derivative and inverse Wigner derivative are:
	\begin{equation} \label{w1}
	\frac{\partial E(a,b,c,d,e,f)}{\partial f}=\frac{\sin f}{\sin E\sin\alpha\sin\beta\sin a\sin b}
	\end{equation}
	\begin{equation} \label{w2}
	\frac{\partial f(A,B,C,D,E,F)}{\partial E}=\frac{\sin E}{\sin f\sin \kappa\sin\sigma\sin A\sin B}
	\end{equation}
\end{lem}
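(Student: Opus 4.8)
The plan is to prove the two formulas by different routes that both reduce to the spherical--triangle computations of Section~\ref{section 1}. For the Wigner derivative \eqref{w1} I would work directly with the link $\Lk(v_0)$ together with one face of $\Delta$, chaining two applications of the triangle formulas \eqref{6}. For the inverse Wigner derivative \eqref{w2} the naive chain rule breaks down, because holding the five remaining \emph{dihedral angles} fixed does not hold fixed the relevant edge lengths of any single triangle; instead I would pass to the polar dual tetrahedron, where ``hold the other dihedral angles fixed'' becomes ``hold the other edge lengths fixed,'' and then simply quote the already-proved formula \eqref{w1}.

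For \eqref{w1}: by the preceding lemma, $E=\Gamma$ is the interior angle of the spherical triangle $\Lk(v_0)$ opposite the side of length $\gamma$, with adjacent sides of lengths $\alpha,\beta$. On the other hand $\gamma$ is the inner angle $\angle v_2v_0v_3$, which is precisely the angle opposite the side $f=\ell(v_2v_3)$ in the face triangle $v_0v_2v_3$, whose remaining two sides are the edges from $v_0$ to the endpoints of $f$, of lengths $a$ and $b$. The geometric observation that makes the chain rule work is that, as $f$ varies with the other five edge lengths of $\Delta$ held fixed, among $\alpha,\beta,\gamma$ only $\gamma$ changes: $\alpha$ and $\beta$ are inner angles in the faces $v_0v_1v_2$ and $v_0v_1v_3$, neither of which contains the edge $v_2v_3$. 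Hence $\frac{\partial E}{\partial f}=\frac{\partial \Gamma}{\partial \gamma}\,\frac{\partial \gamma}{\partial f}$, where $\frac{\partial \Gamma}{\partial\gamma}=\frac{\sin\gamma}{\sin E\sin\alpha\sin\beta}$ by formula (\ref{6}a) applied to $\Lk(v_0)$, and $\frac{\partial\gamma}{\partial f}=\frac{\sin f}{\sin a\sin b\sin\gamma}$ by implicitly differentiating the cosine law $\cos f=\cos a\cos b+\sin a\sin b\cos\gamma$ in the face triangle. Multiplying, the factor $\sin\gamma$ cancels and \eqref{w1} follows.

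For \eqref{w2}: let $\Delta^*$ be the polar dual tetrahedron, with the standard duality dictionary under which the edge lengths of $\Delta^*$ are $\pi$ minus the dihedral angles of $\Delta$ and the dihedral angles of $\Delta^*$ are $\pi$ minus the edge lengths of $\Delta$, the correspondence sending each edge to its opposite (complementary) edge. Writing $L^*=\pi-E$ for the dual edge length corresponding to $E$, and $\Theta^*=\pi-f$ for the dual dihedral angle corresponding to $f$, the key point is that holding the five other dihedral angles of $\Delta$ fixed is exactly holding the five other edge lengths of $\Delta^*$ fixed, so that $\frac{\partial f}{\partial E}=\frac{\partial\Theta^*}{\partial L^*}$ is literally the \emph{Wigner derivative} of $\Delta^*$. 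Applying \eqref{w1} to $\Delta^*$ and using $\sin(\pi-x)=\sin x$ turns $\sin L^*,\sin\Theta^*$ into $\sin E,\sin f$; the two face-triangle sides of $\Delta^*$ are dual edges whose lengths are $\pi$ minus dihedral angles of $\Delta$, producing the factor $\sin A\sin B$; and the two link sides of $\Lk(u_0)\subseteq\Delta^*$ are the inner angles of $\Delta^*$ at the dual vertex, which I would name $\kappa$ and $\sigma$. This yields \eqref{w2} exactly.

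The main obstacle is bookkeeping rather than analysis. For \eqref{w1} it is the geometric verification that $\alpha$ and $\beta$ are constant under variation of $f$; this is what licenses the single-variable chain rule and must be argued from the face structure, not assumed. For \eqref{w2} the real subtlety is recognising that the inverse derivative cannot be obtained by mimicking the proof of \eqref{w1} on the same triangles---the wrong quantities would be held fixed---and that passing to the polar dual is the mechanism that realigns the constraints. Setting up the duality dictionary with the correct complementary-edge and opposite-edge correspondences, and verifying that it carries ``$E$ varies, other dihedrals fixed'' to ``$L^*$ varies, other dual lengths fixed,'' is the step most in need of care.
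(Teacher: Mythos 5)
Your proof of \eqref{w1} is correct and coincides with the paper's: $E=\Gamma$ is the angle of $\Lk(v_0)$ opposite $\gamma$, among $\alpha,\beta,\gamma$ only $\gamma$ depends on $f$, and the chain rule together with \eqref{6} gives the result. However, your starting premise for \eqref{w2} --- that ``the naive chain rule breaks down'' and one is forced to pass to the polar dual --- is not correct. The paper proves \eqref{w2} by exactly the same chain-rule mechanism, with faces and links interchanged and the cosine law replaced by the dual cosine law: $f=f(\kappa,\sigma,\gamma)$ by the dual cosine rule for the face triangle $v_0v_2v_3$, and $\kappa=\kappa(A,C,F)$, $\sigma=\sigma(B,D,F)$, $\gamma=\gamma(A,B,E)$ by the dual cosine rules for $\Lk(v_2)$, $\Lk(v_3)$, $\Lk(v_0)$. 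These express $f$ as a function of the six dihedral angles alone, so the correct variables are automatically held fixed; since $E$ enters only through $\gamma$, one gets $\frac{\partial f}{\partial E}=\frac{\partial f}{\partial \gamma}\frac{\partial \gamma}{\partial E}$ and \eqref{6} finishes the proof. Your polar-duality idea is a legitimate alternative, but it does implicitly what this face/link exchange does explicitly.

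As executed, moreover, your dual argument has a genuine gap. The reduction $\frac{\partial f}{\partial E}=\frac{\partial \Theta^*}{\partial L^*}$, a Wigner derivative of $\Delta^*$, is right. But with the standard labelling $u_k=$ outward unit normal to the face of $\Delta$ opposite $v_k$, applying \eqref{w1} to $\Delta^*$ at the vertex $u_0$, as you propose, does \emph{not} yield \eqref{w2}: the two ``face-triangle sides'' are then the dual edges $u_0u_2$ and $u_0u_3$, whose lengths are $\pi-D$ and $\pi-C$ (they are dual to the edges $v_1v_3$ and $v_1v_2$ of $\Delta$), so the factor produced is $\sin C\sin D$, not $\sin A\sin B$; and the two sides of $\Lk(u_0)$ are $\pi$ minus interior angles of the face $v_1v_2v_3$ of $\Delta$, which are not the $\kappa,\sigma$ of the lemma (those are angles of the face $v_0v_2v_3$, at $v_2$ and $v_3$). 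Simply ``naming'' the dual inner angles $\kappa$ and $\sigma$ hides this mismatch: the formula you obtain is numerically equal to \eqref{w2} but is a different expression, and proving the two equal would require further sine-law identities of exactly the kind you were trying to avoid. The repair is twofold: apply \eqref{w1} to $\Delta^*$ at the \emph{other} endpoint $u_1$ of the edge $u_0u_1$, so that the relevant dual edges are $u_1u_2$ and $u_1u_3$, of lengths $\pi-B$ and $\pi-A$; and prove that $\Lk(u_1)$ is the polar dual triangle of the face $v_0v_2v_3$ (the analogue for $\Delta^*$ of the paper's lemma $\Gamma=E$), so that its relevant sides are $\pi-\sigma$ and $\pi-\kappa$. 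With those two corrections your route does deliver \eqref{w2} exactly.
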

\begin{proof}
	For the Wigner derivative,
	\begin{align}
	\nonumber E & =E(\alpha ,\beta ,\gamma) && \left(\begin{array}{c} \text{by cosine rule} \\ \text{for $\Lk(v_0)$, see Fig. \ref{figure3}} \end{array} \right) \\
	\label{8}
	&=E(\alpha (a,c,e),\beta (b,d,e),\gamma (a,b,f)) && \left( \begin{array}{c} \text{by cosine rule for triangles} \\ \text{$v_0v_1v_2,v_0v_1v_3$ and $v_0v_2v_3$} \end{array} \right)
	\end{align}
	and so by the chain rule, 
	\begin{align*}
	\frac{\partial E}{\partial f}&=\frac{\partial E}{\partial \gamma}\frac{\partial \gamma}{\partial f}\\
	&=\left(\frac{\sin \gamma}{\sin E\sin \alpha \sin \beta}\right)\left(\frac{\sin f}{\sin \gamma \sin a\sin b}\right)\\
	&=\frac{\sin f}{\sin E\sin \alpha \sin \beta \sin a \sin b} \, .
	\end{align*}
	For the inverse Wigner derivative,
	\begin{align*}
	f &= f(\kappa ,\sigma ,\gamma) && \left( \begin{array}{c} \text{by dual cosine rule} \\ \text{for triangle $v_0v_2v_3$ }\end{array}\right)\\
	&= f(\kappa (A,C,F),\sigma (B,D,F),\gamma (A,B,E)) &&\left( \begin{array}{c} \text{by dual cosine rule for $\Lk(v_2)$,} \\ \text{$Lk(v_3)$ and $Lk(v_0)$, see Fig. \ref{figure3}} \end{array} \right)
	\end{align*}
	and so by the chain rule,
	\begin{align*}
	\frac{\partial f}{\partial E}&=\frac{\partial f}{\partial \gamma}\frac{\partial \gamma}{\partial E}\\
	&= (\frac{\sin \gamma}{\sin f\sin \kappa \sin \sigma})(\frac{\sin E}{\sin \gamma \sin A\sin B})\\
	&= \frac{\sin E}{\sin f\sin \kappa\sin \sigma \sin A\sin B} \, .
	\end{align*}
\end{proof}
\begin{figure}[t]
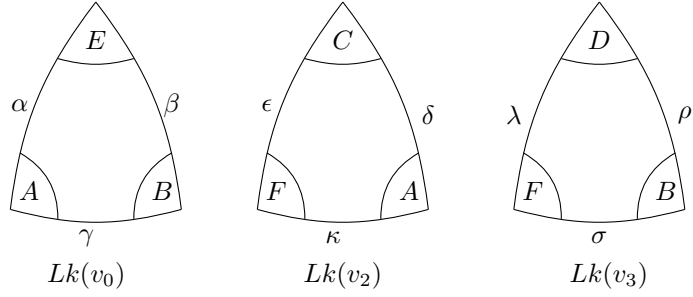

	\ctikzfig{sphericaltriangles}
	\caption{The links of $v_0,v_2$ and $v_3$.}
	\label{figure3}
\end{figure}

To write these derivatives in the form (\ref{1}) and (\ref{2}), we will need Freidel and Louapre's formula for the determinant of the $4\times 4$ Gram matrix $G_{ij}=\cos (l(v_i,v_j))$, for which we give our own proof.
\begin{lem}[See \cite{freidel2003asymptotics}]
	\label{lemma5} $\sqrt{\det G}=\sin a\sin b\sin e\sin \alpha \sin \beta \sin E$.
\end{lem}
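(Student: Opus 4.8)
The plan is to exploit the embedding of $\Delta$ already introduced in the proof that $\Gamma = E$, and thereby reduce the $4\times 4$ Gram determinant to the $3\times 3$ Gram determinant of the link $\Lk(v_0)$, where Lemma \ref{lemma3} applies verbatim. After rotating so that $v_0=(1,0,0,0)$ and $v_i=(\cos l_{0i},\sin l_{0i}\,n_i)$ for $i=1,2,3$, with $n_i\in S^2$ the vertices of $\Lk(v_0)$, I would assemble these four unit vectors as the columns of a $4\times 4$ matrix $V=[\,v_0\,|\,v_1\,|\,v_2\,|\,v_3\,]$. Since $G_{ij}=\cos(l(v_i,v_j))=v_i\cdot v_j$, we have $G=V^{\mathsf T}V$, and hence $\sqrt{\det G}=|\det V|$ (the $v_i$ are linearly independent for a nondegenerate tetrahedron, so $\det V\neq 0$ and $\det G>0$).

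The computational heart is then a single cofactor expansion. Reading off the labels from \eqref{8}, the three edges incident to $v_0$ are $l_{01}=e$, $l_{02}=a$, $l_{03}=b$, so in block form
\begin{equation*}
V=\begin{pmatrix} 1 & \cos e & \cos a & \cos b \\ 0 & \sin e\,n_1 & \sin a\,n_2 & \sin b\,n_3 \end{pmatrix}.
\end{equation*}
Expanding along the first column and pulling $\sin e$, $\sin a$, $\sin b$ out of the three columns of the resulting $3\times 3$ block gives
\begin{equation*}
\det V=\sin a\sin b\sin e\;\det[\,n_1\,|\,n_2\,|\,n_3\,].
\end{equation*}

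Finally I would recognise $\det[\,n_1\,|\,n_2\,|\,n_3\,]$ as, up to sign, the square root of the Gram determinant of $\Lk(v_0)$. The spherical distances between the $n_i$ are exactly the inner angles at $v_0$, i.e.\ the edge lengths $\alpha,\beta,\gamma$ of $\Lk(v_0)$, so the Gram matrix of $n_1,n_2,n_3$ is precisely the $3\times 3$ matrix of Lemma \ref{lemma3} for that triangle. Applying Lemma \ref{lemma3} to $\Lk(v_0)$ with $\gamma$ as the distinguished side and $\Gamma$ its opposite angle yields $|\det[\,n_1\,|\,n_2\,|\,n_3\,]|=\sin\Gamma\sin\alpha\sin\beta$, and substituting $\Gamma=E$ from the preceding lemma gives $\sqrt{\det G}=\sin a\sin b\sin e\,\sin\alpha\sin\beta\sin E$, as claimed.

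The only delicacy I anticipate is bookkeeping: correctly identifying which three of $a,\dots,f$ meet at $v_0$ and matching the link's side lengths $\alpha,\beta,\gamma$ to the inner angles $\angle v_iv_0v_j$, together with checking that all sines are positive (edge lengths and angles lie in $(0,\pi)$) so that passing to absolute values causes no trouble. The genuinely substantive input, namely the reduction of a $4\times 4$ spherical Gram determinant to a $3\times 3$ one, is supplied for free by the factorisation $G=V^{\mathsf T}V$; this is why I expect to recover Freidel and Louapre's formula directly rather than invoke it as a black box.
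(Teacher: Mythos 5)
Your proof is correct, and it reaches the paper's formula by a different mechanism, although the two arguments share the same skeleton (reduce $\det G$ to the Gram determinant $\det G'$ of $\Lk(v_0)$, apply Lemma \ref{lemma3}, and use $\Gamma=E$). The paper never leaves the Gram matrix: it clears the first column by row operations and then recognises, via three silent applications of the spherical cosine law (to the triangles $v_0v_1v_2$, $v_0v_1v_3$ and $v_0v_2v_3$), that the resulting $3\times 3$ block equals $D\,G'\,D$ with $D=\operatorname{diag}(\sin e,\sin a,\sin b)$, which yields $\det G=\sin^2 a\sin^2 b\sin^2 e\,\det G'$. You instead get this reduction for free from the factorisation $G=V^{\mathsf T}V$ in the coordinates already set up for the proof of $\Gamma=E$: the block form of $V$ makes $\det V=\sin a\sin b\sin e\,\det[\,n_1\,|\,n_2\,|\,n_3\,]$ a one-line cofactor expansion, and identifying the Gram matrix of the $n_i$ with $G'$ is essentially definitional, since the spherical distances between the $n_i$ are the inner angles $\alpha,\beta,\gamma$ at $v_0$. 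The trade-off is this: your route is more conceptual, exhibiting $\det G$ as the square of a volume determinant and explaining structurally why the link appears, at the modest cost of invoking the embedding, nondegeneracy (so $\det V\neq 0$), and the passage through absolute values, all of which you correctly flag; the paper's route is self-contained matrix algebra on $G$, but the cosine-law identities it uses implicitly are exactly the relations your coordinates encode. So the difference is one of mechanism rather than strategy, and your version arguably makes the key step ($4\times 4$ Gram determinant $\to$ $3\times 3$ link Gram determinant) more transparent.
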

\begin{proof}
	\begin{align*}
	\det G&= \det \begin{pmatrix}
	1&\cos e&\cos a&\cos b\\
	\cos e&1&\cos c&\cos d\\
	\cos a&\cos c&1&\cos f\\
	\cos b&\cos d&\cos f&1
	\end{pmatrix}\\
	&= \det \begin{pmatrix}
	1&\cos e&\cos a&\cos b\\
	0&1-\cos^2 e&\cos c-\cos a\cos e&\cos d-\cos b\cos e\\
	0& \cos c-\cos a\cos e& 1-\cos^2 a&\cos f-\cos a\cos b\\
	0& \cos d-\cos b\cos e& \cos f-\cos a\cos b&1-\cos^2b
	\end{pmatrix}\\
	&= \sin^2a \sin^2b\sin^2e \det \begin{pmatrix}
	1& \cos \alpha& \cos \beta\\
	\cos \alpha& 1& \cos \gamma\\
	\cos \beta& \cos \gamma& 1
	\end{pmatrix}\\
	&= \sin^2a \sin^2b\sin^2e \det G'
	\end{align*}
	where $G'$ is the Gram matrix of $Lk(v_0)$. Now use Lemma \ref{lemma3}.
\end{proof}
Note that Lemmas \ref{lemma4} and \ref{lemma5} combine to give a different proof of Taylor and Woodward' s formula for the Wigner derivative.
\begin{thm}[Taylor-Woodward \cite{taylor20036j}]
	The Wigner derivative for a spherical tetrahedron (see Figure \ref{tet-fig-big}a) is $$\frac{\partial E(a,b,c,d,e,f)}{\partial f}=\frac{\sin e\sin f}{\sqrt{\det G}}$$
\end{thm}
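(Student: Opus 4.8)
The plan is to combine the two immediately preceding lemmas, since all the substantive work has already been done: Lemma \ref{lemma4} computes the Wigner derivative in one form, and Lemma \ref{lemma5} evaluates $\sqrt{\det G}$ in a directly compatible form. First I would recall equation \eqref{w1} from Lemma \ref{lemma4},
$$\frac{\partial E(a,b,c,d,e,f)}{\partial f}=\frac{\sin f}{\sin E\sin\alpha\sin\beta\sin a\sin b},$$
and aim to rewrite the denominator so that it becomes $\sqrt{\det G}$.

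The key observation is that Lemma \ref{lemma5} gives $\sqrt{\det G}=\sin a\sin b\sin e\sin\alpha\sin\beta\sin E$, which is exactly the denominator of \eqref{w1} with one extra factor of $\sin e$. So I would multiply numerator and denominator of \eqref{w1} by $\sin e$ to obtain
$$\frac{\partial E}{\partial f}=\frac{\sin e\sin f}{\sin a\sin b\sin e\sin\alpha\sin\beta\sin E}=\frac{\sin e\sin f}{\sqrt{\det G}},$$
where the final equality is precisely the statement of Lemma \ref{lemma5}. This is the claimed formula.

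There is essentially no genuine obstacle at this stage, as the result is a one-line algebraic consolidation of the two lemmas. The only point requiring care is bookkeeping: one must check that the factors $\sin\alpha$, $\sin\beta$, $\sin E$ arising from the link $\Lk(v_0)$ agree between the chain-rule computation of the Wigner derivative and Freidel and Louapre's determinant formula. They do, precisely because both computations are organized around the same vertex $v_0$ and its link. The real mathematical content lies upstream, in the identification $\Gamma=E$ and in the factorization $\det G=\sin^2 a\sin^2 b\sin^2 e\,\det G'$ that reduces the $4\times4$ Gram determinant to the $3\times3$ Gram determinant of $\Lk(v_0)$, to which Lemma \ref{lemma3} then applies.
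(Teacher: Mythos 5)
Your proposal is correct and is precisely the argument the paper intends: the text immediately preceding the theorem notes that Lemmas \ref{lemma4} and \ref{lemma5} ``combine to give'' the formula, and your one-line consolidation (multiplying numerator and denominator of \eqref{w1} by $\sin e$ and invoking Lemma \ref{lemma5}) is exactly that combination.
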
 
Moreover, we can now prove our main results.
\begin{thm} \label{main-theorem}
	The inverse Wigner derivative for a spherical tetrahedron (see Figure \ref{tet-fig-big}a) is 
	$$\frac{\partial f(A,B,C,D,E,F)}{\partial E}=\frac{\sin e\sin f}{\sqrt{\det G}}$$
\end{thm}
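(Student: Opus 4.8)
The plan is to exploit the fact that the Wigner-derivative formula (the Taylor--Woodward theorem just proved) is already in hand, so it suffices to establish reciprocity $\frac{\partial f}{\partial E}=\frac{\partial E}{\partial f}$ and then simply read off the value $\frac{\sin e\sin f}{\sqrt{\det G}}$. Following the template of the spherical-triangle proof in Section \ref{section 1}, I would form the ratio of the two expressions supplied by Lemma \ref{lemma4} and show it equals $1$. Explicitly, \eqref{w1} and \eqref{w2} give
\[
\frac{\partial E/\partial f}{\partial f/\partial E}=\frac{\sin^2 f\,\sin\kappa\,\sin\sigma\,\sin A\,\sin B}{\sin^2 E\,\sin\alpha\,\sin\beta\,\sin a\,\sin b},
\]
so the whole theorem reduces to the single trigonometric identity $\sin^2 E\,\sin\alpha\sin\beta\sin a\sin b=\sin^2 f\,\sin\kappa\sin\sigma\sin A\sin B$.

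To prove this identity I would invoke the sine law \eqref{5} in exactly the two spherical triangles already used to set up Lemma \ref{lemma4}. In the link $\Lk(v_0)$ the sides are $\alpha,\beta,\gamma$, and by the lemma $\Gamma=E$ together with the relation $\gamma=\gamma(A,B,E)$ the opposite angles are $A,B,E$ with $E$ opposite $\gamma$; hence $\sin\alpha\sin\beta=\frac{\sin^2\gamma}{\sin^2 E}\sin A\sin B$. In the face $v_0v_2v_3$ the sides are $a,b,f$, with $\gamma$ the angle opposite $f$ and $\kappa,\sigma$ the remaining two angles; hence $\sin a\sin b=\frac{\sin^2 f}{\sin^2\gamma}\sin\kappa\sin\sigma$. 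Substituting both into $\sin^2 E\,\sin\alpha\sin\beta\sin a\sin b$, the factors $\sin^2 E$ and $\sin^2\gamma$ cancel and one is left with $\sin^2 f\,\sin\kappa\sin\sigma\sin A\sin B$, which is the desired identity.

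With the ratio equal to $1$ we conclude $\frac{\partial f}{\partial E}=\frac{\partial E}{\partial f}=\frac{\sin e\sin f}{\sqrt{\det G}}$, the last equality being the Taylor--Woodward formula proved immediately above; equivalently one may feed the identity into Lemma \ref{lemma5} and combine with \eqref{w2} directly. Conceptually, the two sine-law applications are nothing but the spherical-triangle reciprocity of Section \ref{section 1} applied to $\Lk(v_0)$ and to the face $v_0v_2v_3$: they encode precisely the statements $\partial E/\partial\gamma=\partial\gamma/\partial E$ and $\partial\gamma/\partial f=\partial f/\partial\gamma$ for the two chain-rule factors appearing in Lemma \ref{lemma4}, so tetrahedral reciprocity factors through two instances of triangular reciprocity.

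I expect the only real obstacle to be bookkeeping rather than analysis: one must pin down the correct side--angle correspondences in the two auxiliary triangles --- in particular that $E$ is the angle of $\Lk(v_0)$ opposite $\gamma$ (the content of the preceding lemma) and that $\{A,B\}$ and $\{\kappa,\sigma\}$ are the respective remaining opposite angles --- so that the two sine laws line up and the $\sin\gamma$ and $\sin E$ factors cancel cleanly. Once these correspondences are fixed the computation is routine, exactly as in the triangle warm-up.
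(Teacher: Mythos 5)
Your proposal is correct and follows essentially the same route as the paper: the paper's proof likewise forms the ratio $\bigl(\partial E/\partial f\bigr)/\bigl(\partial f/\partial E\bigr)$ from Lemma \ref{lemma4}, uses the sine law in $\Lk(v_0)$ and in the triangle $v_0v_2v_3$ to reduce it to $\frac{\sin^2 f}{\sin^2 E}\frac{\sin^2 E}{\sin^2\gamma}\frac{\sin^2\gamma}{\sin^2 f}=1$, and then reads off the value from the Taylor--Woodward formula. Your side--angle correspondences ($E$ opposite $\gamma$ in $\Lk(v_0)$, $\gamma$ opposite $f$ in the face) are exactly the ones the paper relies on, so no gap remains.
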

\begin{cor}[Reciprocity of the Wigner derivative] \label{main-corollary}
	For spherical tetrahedra, the Wigner derivative is equal to the inverse Wigner derivative.
\end{cor}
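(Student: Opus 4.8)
The plan is to establish Theorem~\ref{main-theorem} first; Corollary~\ref{main-corollary} is then immediate, since the Taylor--Woodward theorem just proved gives $\frac{\partial E}{\partial f}=\frac{\sin e\sin f}{\sqrt{\det G}}$ while Theorem~\ref{main-theorem} exhibits $\frac{\partial f}{\partial E}$ as the same expression. Following the spherical triangle argument of Section~\ref{section 1}, I would not recompute the inverse Wigner derivative from scratch, but instead show that the two derivatives supplied by Lemma~\ref{lemma4} coincide. Forming the ratio of \eqref{w1} to \eqref{w2} reduces the claim $\frac{\partial E}{\partial f}=\frac{\partial f}{\partial E}$ to the single trigonometric identity
\begin{equation*}
\sin^2 f\,\sin\kappa\,\sin\sigma\,\sin A\,\sin B=\sin^2 E\,\sin\alpha\,\sin\beta\,\sin a\,\sin b .
\end{equation*}

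I would verify this identity by two applications of the spherical sine law \eqref{5} in two different triangles. First, in the face triangle $v_0v_2v_3$, whose edge lengths are $a,b,f$ and whose inner angles are $\gamma$ at $v_0$ and $\kappa,\sigma$ at $v_2,v_3$, the sine law yields $\sin a\,\sin b=\sin^2 f\,\sin\kappa\,\sin\sigma/\sin^2\gamma$. Second, in the link $\Lk(v_0)$, whose edge lengths are $\alpha,\beta,\gamma$ and whose angle opposite $\gamma$ is $E$, the sine law yields $\sin\alpha\,\sin\beta=\sin^2\gamma\,\sin A\,\sin B/\sin^2 E$, provided one knows that the angles of $\Lk(v_0)$ opposite $\alpha$ and $\beta$ are the dihedral angles $A$ and $B$ (in some order). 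Substituting both relations into the right-hand side of the identity cancels all the $\gamma$ factors and collapses it to the left-hand side. Tracing this back through \eqref{w2} and then invoking Lemma~\ref{lemma5} to write $\sqrt{\det G}=\sin a\sin b\sin e\sin\alpha\sin\beta\sin E$ rewrites $\frac{\partial f}{\partial E}$ in the desired form $\frac{\sin e\sin f}{\sqrt{\det G}}$.

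The main obstacle is the geometric input required for the second sine-law step: identifying the two remaining angles of $\Lk(v_0)$ (those opposite $\alpha$ and $\beta$) as dihedral angles of $\Delta$. This is the analog, for the other two vertices of the link, of the already-proved fact $\Gamma=E$, and it rests on the same principle that the inner angle of $\Lk(v_0)$ at the vertex $n_i$ equals the dihedral angle of $\Delta$ along the edge $v_0v_i$. The genuinely delicate part is the bookkeeping: one must correctly match each edge of each auxiliary triangle to its edge length and each vertex to its dihedral angle (for instance, checking that the edge of $\Lk(v_0)$ opposite $n_i$ has the length claimed above), and keep the convention relating a lowercase edge length to its opposite-edge dihedral angle consistent across the face triangle and the link. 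Once these identifications are fixed the two sine laws apply directly, the identity follows, and both Theorem~\ref{main-theorem} and Corollary~\ref{main-corollary} are established.
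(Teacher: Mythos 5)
Your proposal is correct and follows essentially the same route as the paper: the paper also proves Theorem~\ref{main-theorem} and Corollary~\ref{main-corollary} together by forming the ratio of \eqref{w1} to \eqref{w2} from Lemma~\ref{lemma4} and showing it equals $1$ via the sine laws for $\Lk(v_0)$ and the face triangle $v_0v_2v_3$, then invoking Lemma~\ref{lemma5}. The geometric identification you flag as delicate (the angles of $\Lk(v_0)$ opposite $\alpha$ and $\beta$ being the dihedral angles $A$ and $B$) is already built into the paper's Lemma~\ref{lemma4} via the dual cosine rule for the links, so no extra work is needed there.
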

\begin{proof}
	Both results follow from:
	\begin{align*}
	\frac{\frac{\partial E}{\partial f}}{\frac{\partial f}{\partial E}}&=\frac{\sin^2 f\sin A\sin B\sin \kappa\sin \sigma}{\sin^2 E\sin\beta \sin\alpha\sin b\sin a}\\
	&=\frac{\sin^2f}{\sin^2E}\frac{\sin^2E}{\sin^2\gamma}\frac{\sin^2\gamma}{\sin^2f} && \left( \begin{array}{c} \text{using sine law for $Lk(v_0)$} \\ \text{and triangle $v_0v_2v_3$} \end{array} \right)\\
	&=1
	\end{align*}
\end{proof}
\bibliographystyle{plain}
\bibliography{references.bib}
\end{document}